\date{}
\def\thmheadbrackets#1#2#3{%
	\thmname{#1}\thmnumber{\@ifnotempty{#1}{ }\@upn{#2}}%
	\thmnote{ {\the\thm@notefont[#3]}}}
\newtheoremstyle{brakets}
{}
{}
{\itshape}
{}
{\bfseries}
{.}
{ }
{\thmheadbrackets{#1}{#2}{#3}}
\theoremstyle{brakets}
\newtheorem{thm}{Theorem}
\newtheorem{cor}[thm]{Corollary}
\def\({\left(}
\def\){\right)}
\newtheorem{lema}{Lemma}[section]
\newtheorem*{teorema*}{Theorem}
\newtheorem{remark}[lema]{Remark}
\newtheorem{example}[lema]{Example}
\newtheorem{theorem}[lema]{Theorem}
\newtheorem{definition}[lema]{Definition}
\hfill \fbox{}}
\hfill \fbox{}}
\numberwithin{equation}{section}
\begin{document}
	\title[Bicomplex numbers]{ Bicomplex Paley Weiner Theorem }
	\author{\bf{Sanjay Kumar}}
	\address{Department of Mathematics, Central University of Jammu, Rahya-Suchani (Bagla)-181 143, Jammu (J$\&$K), INDIA.}
	\email{sanjaykmath@gmail.com, sanjay.math@cujammu.ac.in}
	{\author{\bf{Stanzin Dolkar}}
	\address{Department of Mathematics, Central University of Jammu, Rahya-Suchani (Bagla)-181 143, Jammu (J$\&$K), INDIA.}
	\email{stanzin.math@cujammu.ac.in}
		
		\thanks{\textnormal{The work of the first author is supported by NBHM (DAE) Grant No. 2/11/41/2017/R$\&$ D-II/3480 and the second author is supported by UGC Grant. No. 1116/CSIR-UGC NET DEC.2018}}

		\subjclass[2010]{30G35, 32A30, 32A10.} \keywords{Bicomplex holomorphic functions, bicomplex Fourier transform, Paley Weiner theorem, bicomplex Cauchy  integral formula.}

		\begin{abstract}
			In this paper, we study the bicomplex version of the Paley-Weiner theorem and the Cauchy integral formula in the upper half-plane.
		\end{abstract}

		\maketitle

		\section{Introduction and Preliminaries}
		The study of bicomplex numbers started in 1892 when Segre \cite{segre1892} found that the property of commutativity had been missing from the skew field of quaternions. The Quaternions were first introduced by W. R. Hamilton in 1844. The study of bicomplex numbers has always been an active field of research. Segre was inspired by the works of Hamilton, and then he introduced a new number system called the bicomplex numbers. \\
		The work of J. D. Riley in \cite{riley1953} has further developed the theory of functions with bicomplex variables. Also, without forgetting to mention the work of G. B. Price  \cite{price1991} who provided us with a very powerful method to study holomorphic functions with bicomplex variables.  \\
		We denote the set of bicomplex numbers by $ \mathbb{BC} $ and define it as follows:
		$$ \mathbb{BC} = \{ z_{1} + j z_{2} ~:~ z_{1}, z_{2} \in \mathbb{C} \}, $$
		where $ \mathbb{C} $ is the set of complex numbers. Therefore, bicomplex numbers are sometimes called complex numbers with complex coefficients. The set of complex numbers has the imaginary unit $ i.$ As we see that $ \mathbb{BC} $ has $i, j$ as its imaginary units, and these two imaginary units are commuting, i.e., $ij=ji$ and also $i^{2} =j^{2} =-1.$ The bicomplex numbers can be added and multiplied. Thus, both addition and multiplication are commutative and associative. \\
		Another important set of numbers is the set of hyperbolic numbers, which can be defined independently of $ \mathbb{BC}.$ We denote the set of hyperbolic numbers by
		$$ \mathbb{D} = \{ a + k b : a,b\in \mathbb{R} \}, $$
		where $k$ is called the hyperbolic unit with $ k^{2} = 1 .$ The set of hyperbolic numbers is also called split-real numbers, Lorentz numbers, perplex numbers, etc. These were first introduced by Cockle \cite{cockle1849}. While working with $ \mathbb{BC}, $ we encounter $ ij=k .$ Thus, we realize that there exists a subset of the bicomplex numbers such that it is isomorphic to the set of split-real numbers. Thus, we can define the set of split-real numbers as
		$$ \mathbb{D} = \{ a+ ijb~:~ a,b \in \mathbb{R} \}. $$\\
		Another important feature of the bicomplex numbers $ \mathbb{BC} $ is the presence of idempotent units $ e$ and $e^{\dagger},$ which makes it possible to represent the bicomplex numbers in their idempotent form. The idempotent units are also called special zero divisors as
		$$ e= \frac{1+ij}{2}\textnormal\;\;{and} \;\; e^{\dagger} = \frac{1-ij}{2}$$
		and
		$$e.e^{\dagger}=0.$$
		Thus, each of them is a zero divisor. Also,
		$$ e^{2}=1 \textnormal\;\;{and} \;\;(e^{\dagger})^{2} =1. $$
		Thus any bicomplex number $ Z$ can be represented as $ Z = e \beta_{1} + e^{\dagger} \beta_{2}, $ where $ \beta_{1}$ and $ \beta_{2} $ are complex numbers. This is called representing a bicomplex number in terms of its idempotent units.
		 
		Next, we describe a few representations of $ Z \in \mathbb{BC} .$ Any $Z \in \mathbb{BC} $ can be written as
		\begin{align} Z =& z_{1} + j z_{2}\label{a}\\
			=& \xi_{1} + k \xi_{2}\label{b}\\
			=& e \beta_{1} + e^{\dagger} \beta_{2}\label{c}\\
			=& a_{1} + i a_{2} + j a_{3} + k a_{4}.\label{d}
		\end{align}
		The equation (\ref{a}) determines $ Z$ as an element of $ \mathbb{C}^{2}(i) ,$ while the equation (\ref{d}) identifies $Z$ as an element of $ \mathbb{R}^{4},$ equation (\ref{c}) is the idempotent representation of $Z.$ Lastly, (\ref{b}) identifies $ Z$ with elements in $ \mathbb{D}^{2}= \mathbb{D} \times \mathbb{D},$ see\cite[Page 7]{alpay2014} for more details.
		We begin with the definition of the upper half plane in $ \mathbb{BC}.$
		
		\begin{definition}\cite{kumar2015}
			We denote the upper half plane in $ \mathbb{BC} $ by $\substack \prod_{\mathbb{BC}^{+}} $ and is defined as
			$$ \displaystyle \substack \prod_{\mathbb{BC}^{+}} = \{ Z \in \mathbb{BC} : Z = z_{1} + j z_{2}~ or~ Z = e \beta_{1} + e^{\dagger} \beta_{2} : ( \beta_{1} , \beta_{2} ) \in \displaystyle \substack \prod^{+} \times \displaystyle \substack \prod^{+} \}, $$
			where $ \displaystyle \substack \prod^{+}  = \{ z \in \mathbb{C}(i) : z = x + i y ~ and~ y > 0 \in \mathbb{C}(i) \}$ is the upper half plane in $ \mathbb{C}.$	
		\end{definition}
		\begin{definition}\cite{reyes2019}
			A set $G$ in $ \mathbb{BC} $ is called a product type set if $ G = e G_{1} + e^{\dagger} G_{2} ,$ where $ G_{1} = \Pi_{1,i}(G) $ and $ G_{2} = \Pi_{2,i}(G) ,$ where $ \Pi_{1,i}(G)$ and $ \Pi_{2,i}(G) $ are the idempotent projections of $G$ on $G_{1}$ and $ G_{2},$ respectively.
		\end{definition}
		
		\begin{definition}\cite{reyes2019}
			A bicomplex function $ F: G \subset \mathbb{BC} \longrightarrow \mathbb{BC} $ is said to be a product type if $ F(Z)= e f_{1}(\beta_{1}) + e^{\dagger} f_{2}(\beta_{2}), $ where each $ f_{k}^{'}s: G_{k^{'}s} \longrightarrow \mathbb{C} ;k=1,2,$ are complex valued functions.
		\end{definition}
		\begin{definition}\cite{kumar2017}
			Let $\mathfrak{M} $ be a $ \sigma$-algebra in a set $G. $ A hyperbolic real-valued bicomplex function $ m = e m_{1} + e^{\dagger} m_{2} $ defined on $ G $ is called a hyperbolic measure if $ m_{1} $ and $ m_{2} $ are real measures on $ \mathfrak{M} .$
		\end{definition}
		
		\begin{definition}
			Let $F$ be a bicomplex product type function defined on $ ( -\infty, \infty )_{\mathbb{D}}  .$ Then
			\begin{equation}\label{A}
				F( e \beta_{1} + e^{\dagger} \beta_{2}) = e F_{1}(\beta_{1}) + e^{\dagger} F_{2}(\beta_{2}),
			\end{equation}
			where $ (-\infty, \infty)_{\mathbb{D}}$ is a bicomplex domain of product type such that
			$$ (-\infty ,\infty )_{\mathbb{D}} = e (-\infty ,\infty ) + e^{\dagger} (-\infty ,\infty ).$$
			It is worth noting that any $ Z$ in the upper half plane $ \substack \prod_{\mathbb{BC}^{+}} $ can also be written as
			$$ Z = e \beta_{1} + e^{\dagger} \beta_{2} \in  \substack \prod_{\mathbb{BC}^{+}}~~\textnormal{if and only if}~~ \beta_{1} = z_{1}-iz_{2} \in  \substack \prod^{+} ~and~ \beta_{2} = z_{1} + i z_{2} \in  \substack \prod^{+}.$$
			Then a simple elaboration shows that
			$$\beta_{1}=z_{1}-iz_{2} = (x_{0}+ix_{1}) - i ( x_{2}+ i x_{3}) =( x_{0} + x_{3}) + i ( x_{1}-x_{2}). $$
			Thus,
			\begin{equation}\label{B}
				\beta_{1} \in  \substack \prod^{+} ~~\textnormal{if and only if}~~ x_{1} - x_{2} > 0
			\end{equation}
			and
			$$\beta_{2}=z_{1}+iz_{2} = (x_{0}+ix_{1}) + i ( x_{2}+ i x_{3}) =( x_{0} - x_{3}) + i ( x_{1}+x_{2}). $$
			Therefore,
			\begin{equation}\label{B1}
				\beta_{2} \in  \substack \prod^{+} ~~\textnormal{if and only if}~~ x_{1}+ x_{2} > 0
			\end{equation}
			\textnormal{Hence equation (\ref{B}) and (\ref{B1}) implies that} $ \beta_{1},\beta_{2} \in \substack \prod^{+}~~\textnormal{if and only if}~~ x_{1}> | x_{2}|.$
		\end{definition}
		
		Next, we define the $ \mathbb{D}-$integral of $F$ on $ ( - \infty, \infty )_{\mathbb{D}} \subset \substack \prod_{\mathbb{BC}^{+}} $ by
		$$ \int_{(-\infty,\infty)_{\mathbb{D}}} F(Z) dZ \odot dZ^{\dagger} = e \int_{(-\infty,\infty)} F_{1}(\beta_{1}) d\beta_{1} + e^{\dagger} \int_{(-\infty,\infty)} F_{2}(\beta_{2}) d\beta_{2} ~~; Z \in \substack \prod_{\mathbb{BC}^{+}}.$$
		
		Using this definition of $ \mathbb{D}-$integral, we say that a bicomplex function $ F $ on $ \substack \prod_{\mathbb{BC}^{+}} $ is $ \mathbb{D}-$square integrable if
		$$ \int_{(-\infty,\infty)_{\mathbb{D}}} \| F \|_{k}^{2} dm \odot dm^{\dagger} < \infty ,$$
		where $ dm $ is the four-dimensional Lebesgue measure such that $ dm = e dm_{1} + e^{\dagger} dm_{2}.$\\
		Using equation (\ref{A}), we can say that $ F $ is $ \mathbb{D}$-square integrable if and only if $ F_{1} $ and $ F_{2} $ are square integrable. That is,
		$$\int_{\prod^{+}}  |F_{i}|^{2} dm_{i} < \infty.$$

		We denote the space of all $ \mathbb{D} -$square integrable functions on $ \substack 
		\prod_{\mathbb{BC}^{+}} $ by $ L_{k}^{2} \big( ( - \infty,\infty )_{\mathbb{D}},dm \big) $ and consequently,
		\begin{equation}\label{B1*}
			L_{k}^{2} \big( ( -\infty, \infty )_{\mathbb{D}} ,dm \big) = e L^{2} ( ( -\infty, \infty ) , dm_{1} ) + e^{\dagger} L^{2} ( ( -\infty, \infty ), dm_{2}).
		\end{equation}
		The hyperbolic norm of $ F \in   L_{k}^{2} \big( ( -\infty,\infty)_{\mathbb{D}}, dm \big) $ is defined as
		$$
		\|F\|_{k,2}^{2} =  e \| F_{1} \|_{1,2}^{2} + e^{\dagger} \| F_{2} \|_{2,2}^{2}. $$
		That is, 
		$$\int_{(-\infty,\infty)_{\mathbb{D}}} \|F\|_{k}^{2} dm =  e \int_{-\infty}^{\infty} | f_{1} |^{2} dm_{1} + e^{\dagger} \int_{-\infty}^{\infty} |f_{2} |^{2} dm_{2}. 
		$$
		Also, bicomplex Bergman spaces on the bounded domain were introduced in \cite{perez2018}.
		
		\begin{theorem}\label{B2}
			For $ 1 \leqslant p \leqslant \infty,$ the Cauchy sequence $ \{F_{n}\}$ in $ L_{k}^{p}(dm)$ with limit $ F$ has a pointwise convergent subsequence, almost everywhere to $ F(x_{0},x_{3}).$
		\end{theorem}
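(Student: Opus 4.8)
The plan is to peel the statement apart into its two idempotent components and apply the classical $L^{p}$-convergence theorem to each. Extending the identification \eqref{B1*} from $p=2$ to a general exponent, I would write $L_{k}^{p}(dm) = e\,L^{p}(dm_{1}) + e^{\dagger}\,L^{p}(dm_{2})$ and decompose each term of the sequence as $F_{n} = e\,F_{n,1} + e^{\dagger}\,F_{n,2}$, together with the limit $F = e\,F_{1} + e^{\dagger}\,F_{2}$. Since the hyperbolic $p$-norm splits as $\|G\|_{k,p}^{p} = e\,\|G_{1}\|_{1,p}^{p} + e^{\dagger}\,\|G_{2}\|_{2,p}^{p}$, and a hyperbolic number is small precisely when both of its real idempotent coordinates are small, the hypothesis that $\{F_{n}\}$ is Cauchy with limit $F$ in $L_{k}^{p}(dm)$ is equivalent to the two scalar statements $F_{n,1}\to F_{1}$ in $L^{p}(dm_{1})$ and $F_{n,2}\to F_{2}$ in $L^{p}(dm_{2})$.

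Next I would invoke the classical companion of the Riesz--Fischer theorem, namely that an $L^{p}$-convergent sequence of complex-valued functions admits a subsequence converging pointwise almost everywhere to the same limit. Applied to the first component this produces indices $n_{1}<n_{2}<\cdots$ with $F_{n_{j},1}\to F_{1}$ $dm_{1}$-a.e.; applied to $\{F_{n_{j},2}\}_{j}$, which still converges to $F_{2}$ in $L^{p}(dm_{2})$, it produces a further subsequence $\{n_{j_{l}}\}_{l}$ with $F_{n_{j_{l}},2}\to F_{2}$ $dm_{2}$-a.e. Passing to this common, nested subsequence secures simultaneous pointwise a.e.\ convergence of both idempotent components along a single index set.

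Finally I would reassemble. Off the exceptional set, which is the union of a $dm_{1}$-null set and a $dm_{2}$-null set and is therefore null for the hyperbolic measure $dm = e\,dm_{1} + e^{\dagger}\,dm_{2}$, one has $F_{n_{j_{l}}} = e\,F_{n_{j_{l}},1} + e^{\dagger}\,F_{n_{j_{l}},2} \to e\,F_{1} + e^{\dagger}\,F_{2} = F$; written in the real coordinates of the $\mathbb{D}$-line, where $\beta_{1}=x_{0}+x_{3}$ and $\beta_{2}=x_{0}-x_{3}$, this is exactly convergence to $F(x_{0},x_{3})$. The routine verifications --- that the norm genuinely splits and that the exceptional set is negligible for $dm$ --- present no difficulty. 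The one genuine point to handle carefully is the alignment of the two component subsequences into a single subsequence of $\{F_{n}\}$, since pointwise convergence in $\mathbb{BC}$ demands that both idempotent projections converge along the \emph{same} indices; the nested extraction above is precisely what supplies this. The endpoint case $p=\infty$ is simpler still, since norm convergence there forces a.e.\ convergence of the whole sequence, so no passage to a subsequence is needed.
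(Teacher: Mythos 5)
Your proposal is correct and takes essentially the same route as the paper's proof: decompose $L_{k}^{p}(dm)$ as $e\,L^{p}(dm_{1}) + e^{\dagger}\,L^{p}(dm_{2})$, apply the classical a.e.-convergent-subsequence theorem to each idempotent component, and reassemble. You are in fact more careful than the paper, which extracts a subsequence for each component separately without ever aligning them to a common index set --- your nested extraction (a subsequence of the first component's subsequence) is exactly the step the paper glosses over, and your observation that the exceptional set is $dm$-null because it is a union of a $dm_{1}$-null and a $dm_{2}$-null set is likewise left implicit there.
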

		\begin{proof}
			The proof of the above theorem is quite simple. From the equation (\ref{B1*}), we have
			\begin{equation}\label{B3}
				L_{k}^{p}(dm)= e L^{p}(dm_{1}) + e^{\dagger} L^{p} ( dm_{2}).
			\end{equation}
			Let $ Z = e \beta_{1} + e^{\dagger} \beta_{2}.$
			Then, having known the fact that for every Cauchy sequences $ \{ F_{n,1}\}$ and $ \{F_{n,2}\} $ in $ L^{p}(dm_{1}) $ and $ L^{p}(dm_{2})$ with limits $ F_{1} $ and $ F_{2} $, has a convergent subsequences converging to $ F_{1}(Re(\beta_{1}))$ and $ F_{2}(Re(\beta_{2}),$ respectively. Thus, the theorem holds for every Cauchy sequences $\{F_{n}\}$ in $ L_{k}^{p}(dm).$
		\end{proof}

		\begin{cor}\label{PW14a}
			Let $\hat{F}$ be the bicomplex Fourier transform of the bicomplex function F. If $F$ lies in $ L_{k}^{2} $ and $ \hat{F} \in L_{k}^{1},$ then
			\begin{equation*}
				F(x_{0},x_{3}) = \int_{(-\infty,\infty)_{\mathbb{D}}} \hat{F}(t) \exp\{i(x_{0}+kx_{3})\} dm(t)~~~~a.e.
			\end{equation*}
		\end{cor}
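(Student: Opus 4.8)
The plan is to reduce the asserted bicomplex inversion formula to the classical complex Fourier inversion theorem, applied separately in each idempotent slot. I would first write $F$ in idempotent form $F = eF_{1}+e^{\dagger}F_{2}$, so that, the bicomplex Fourier transform being a product-type operation, $\hat{F}=e\widehat{F_{1}}+e^{\dagger}\widehat{F_{2}}$ with $\widehat{F_{1}},\widehat{F_{2}}$ the ordinary complex Fourier transforms of $F_{1},F_{2}$. The hypotheses then separate cleanly: by the splitting (\ref{B1*}) the assumption $F\in L_{k}^{2}$ is equivalent to $F_{1},F_{2}\in L^{2}$, and by (\ref{B3}) with $p=1$ the assumption $\hat{F}\in L_{k}^{1}$ is equivalent to $\widehat{F_{1}},\widehat{F_{2}}\in L^{1}$.

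Next I would decompose the exponential kernel. Since $k=ij$, the idempotent identities give $ek=e$ and $e^{\dagger}k=-e^{\dagger}$; writing the integration variable as $t=et_{1}+e^{\dagger}t_{2}$ one then gets $t(x_{0}+kx_{3})=e\,t_{1}(x_{0}+x_{3})+e^{\dagger}t_{2}(x_{0}-x_{3})$, and because the bicomplex exponential is itself of product type, $\exp\{eA+e^{\dagger}B\}=e\exp\{A\}+e^{\dagger}\exp\{B\}$, the kernel splits as
$$ \exp\{it(x_{0}+kx_{3})\}=e\,e^{it_{1}(x_{0}+x_{3})}+e^{\dagger}e^{it_{2}(x_{0}-x_{3})}. $$
Substituting this together with $\hat{F}=e\widehat{F_{1}}+e^{\dagger}\widehat{F_{2}}$ into the definition of the $\mathbb{D}$-integral and using $e\,e^{\dagger}=0$, the right-hand side of the claimed identity collapses to
$$ e\int_{-\infty}^{\infty}\widehat{F_{1}}(t_{1})\,e^{it_{1}(x_{0}+x_{3})}\,dm_{1}(t_{1})+e^{\dagger}\int_{-\infty}^{\infty}\widehat{F_{2}}(t_{2})\,e^{it_{2}(x_{0}-x_{3})}\,dm_{2}(t_{2}). $$

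Each scalar integral is now exactly the classical inverse Fourier transform of $\widehat{F_{i}}$, evaluated at $\operatorname{Re}\beta_{1}=x_{0}+x_{3}$ and $\operatorname{Re}\beta_{2}=x_{0}-x_{3}$, the real parts arising from the idempotent change of variables $\beta_{1}=z_{1}-iz_{2}$ and $\beta_{2}=z_{1}+iz_{2}$. Because $F_{i}\in L^{2}$ and $\widehat{F_{i}}\in L^{1}$, the classical Fourier inversion theorem applies in each slot and returns $F_{i}$ almost everywhere. Recombining through the pointwise identification $F(x_{0},x_{3})=eF_{1}(x_{0}+x_{3})+e^{\dagger}F_{2}(x_{0}-x_{3})$ furnished by Theorem \ref{B2} then delivers the asserted formula a.e.

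The step I expect to require the most care is the analytic bookkeeping rather than any hard estimate. The two component inversions hold only off two a priori distinct null sets, so one must verify that their union is again null in order that the hyperbolic recombination be valid almost everywhere. One must also confirm that the real coordinates $(x_{0},x_{3})$ enter correctly as the arguments $x_{0}+x_{3}$ and $x_{0}-x_{3}$, so that the kernel decomposition above matches the real-part identifications $\operatorname{Re}\beta_{1}=x_{0}+x_{3}$, $\operatorname{Re}\beta_{2}=x_{0}-x_{3}$ and is consistent with the upper half plane conditions (\ref{B}) and (\ref{B1}); once this is settled, the result is just the two classical inversions glued together by the idempotent decomposition.
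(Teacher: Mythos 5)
Your proposal is correct and follows essentially the approach the paper itself intends: the paper states Corollary \ref{PW14a} with no proof at all, and your scheme of idempotent splitting, componentwise application of the classical inversion theorem (valid under $F_{i}\in L^{2}$, $\widehat{F_{i}}\in L^{1}$ via Plancherel), and recombination is precisely the reduction used for every other result in the paper. One small correction: the pointwise identification $F(x_{0},x_{3})=eF_{1}(x_{0}+x_{3})+e^{\dagger}F_{2}(x_{0}-x_{3})$ is not furnished by Theorem \ref{B2} (which only asserts that $L_{k}^{p}$-Cauchy sequences have a.e.-convergent subsequences); it is immediate from the definition of a product-type function together with $x_{0}+kx_{3}=e(x_{0}+x_{3})+e^{\dagger}(x_{0}-x_{3})$, and that is what your final recombination step should cite.
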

		For recent work on bicomplex analysis and its applications, one can refer to \cite{alpay2014,bie2012,colombo13,elizarraras2015,Lu-Sh-Va12} and the references therein.
		
		\section{Bicomplex Fourier Transforms}
		
		The bicomplex Fourier transform for functions of bicomplex variables is studied in \cite{banerjee2015,bie2012,ghanmi2019}.
		The standard bicomplex Fourier transform is defined as
		$$ \mathcal{F}_{\mathbb{BC}}(F)(Z) = \frac{1}{\sqrt{2\pi}} \int_{(-\infty,\infty)_{\mathbb{D}}} \exp\{-itZ\} F(t) dt \odot dt^{\dagger},$$
		where $ Z = e \beta_{1} + e^{\dagger} \beta_{2}.$
		
	Now, using the idempotent units $ e$ and $ e^{\dagger},$ we have
	\begin{align*}
		\mathcal{F}_{\mathbb{BC}}(F(Z)) =& \frac{1}{2\pi} \int_{(-\infty,\infty)_{\mathbb{D}}} \exp\{-it(e\beta_{1}+e^{\dagger}\beta_{2})\} dt \odot dt^{\dagger}\nonumber\\
		=& \frac{1}{2\pi} e \int_{-\infty}^{\infty} \exp\{-it\beta_{1}\} F_{1} dt + \frac{1}{2\pi} e^{\dagger} \int_{-\infty}^{\infty} \exp\{ -it\beta_{2}\} F_{2}(t) dt^{\dagger}\nonumber\\
		=& e \mathcal{F_{1}}(F_{1}) + e^{\dagger} \mathcal{F_{2}}(F_{2}).\nonumber\\
	\end{align*}

	\begin{example}\cite{banerjee2015}
		Consider $ F(t) = \exp\{-\|t\|_{k}\}.$ 
		Then,\\
		$$ \mathcal{F}_{\mathbb{BC}} ( F(t) ) = \frac {2} { 1 + Z^{2}} ~~;~~Z = z_{1} + j z_{2}, $$
		where $\mathcal{F}_{\mathbb{BC}}(F(t)) = \hat{F}(Z) = e \hat{F}_{1}(z_{1}) + e^{\dagger} \hat{F}_{2} (z_{2}) , \quad \hat{F}_{1}(z_{1}) = \frac{2}{1+z_{1}^{2}} $\quad  and \quad $ \hat{F}_{2} (z_{2}) = \frac{2}{1+ z_{2}^{2}} $ such that $ \hat{F}_{1}(z_{1}) $ and $ \hat{F}_{2}(z_{2}) $ are holomorphic in $ -1 < Img (z_{1}) $ and $ Img(z_{2}) < 1 .$
	\end{example}
	The above example shows that often, $ \hat{F} $ can be extended to a function holomorphic in some regions of $ \mathbb{BC} .$ Next, keeping in mind that $ \exp\{itZ\} $ is a holomorphic function of $ Z,$ we can expect and at the same time discuss a few conditions on $ F,$ when imposed on  $F$ turn it's bicomplex Fourier transform $ \hat{F}(t) $ into a holomorphic function in certain regions of $ \mathbb{BC} .$\\
	For the above claim, let $ \mathfrak{F} \in L_{k}^{2} ( ( -\infty,\infty)_{\mathbb{D}}, dm)$ such that $ \mathfrak{F}(t) = 0 $ on $ ( -\infty, 0)_{\mathbb{D}} .$ Then define
	\begin{equation}\label{p4}
		F(Z) = \int_{(0, \infty)_{\mathbb{D}}} \mathfrak{F}(t) \exp\{ itZ\} dt \odot dt^{\dagger},
	\end{equation}
	where $Z$ lies in the bicomplex upper half plane $ \substack \prod_{\mathbb{BC}^{+}}.$ Then
	\begin{align*}
		\exp\{itZ\} =& \exp \{ it(e\beta_{1} + e^{\dagger} \beta_{2})\}\nonumber\\
		=& e \exp \{ it\beta_{1}\} + e^{\dagger} exp \{ it \beta_{2} \} \nonumber\\
		=& e \exp \{ it [ ( x_{0} + x_{3}) + i ( x_{1}- x_{2} ) ]\} + e^{\dagger}\exp \{ it [ ( x_{0}-x_{3}) + i ( x_{1} + x_{2} )]\}.\nonumber\\
	\end{align*}
	Therefore, if $ Z \in  \substack \prod_{\mathbb{BC}^{+}}, $ then
	\begin{align*}
		\|\exp\{itZ\}\|_{k} =&\|  e \exp \{ it [ ( x_{0} + x_{3}) + i ( x_{1}- x_{2} ) ]\} \\ + & e^{\dagger}\exp \{ it [ ( x_{0}-x_{3}) + i ( x_{1} + x_{2} )] \|_{k}  \nonumber\\
		\leqslant & e \| \exp \{ it(x_{0} + x_{3}) \} \exp \{-t(x_{1} - x_{2}) \}\|_{1} \\ & e^{\dagger} \| \exp \{ it ( x_{0} - x_{3} ) \} \exp \{ -t(x_{1} + x_{2}) \} \|_{2} \nonumber\\
		\leqslant & e \|\exp \{ -t(x_{1}-x_{2})\}\|_{1} + e^{\dagger} \| \exp \{ -t ( x_{1} + x_{2} )\} \|_{2} \nonumber\\
		=&  e \| \exp \{ - t Img(\beta_{1}) \} \|_{1} + e^{\dagger} \| \exp \{ - t Img(\beta_{2} ) \} \|_{2} \nonumber\\\
		=& \exp \{ (e(-t Img\beta_{1})) + e^{\dagger} (-t Img \beta-{2} ) \} \nonumber\\
		=& \exp \{ - t ( e ( x_{1} - x_{2}) + e^{\dagger} ( x_{1} + x_{2}) ) \} \nonumber\\
		=& \exp \{ - t (x_{1} - k x_{2}) \}. \nonumber
	\end{align*}
	Hence, (\ref{p4}) exists and is well-defined. \\
	
	From equation (\ref{p4}),
	\begin{align}
		F(Z) =& \int_{(0,\infty)_{\mathbb{D}}}\mathfrak{F}(t) \exp\{ itZ\} dt \odot dt^{\dagger}\nonumber\\
		=& e \int_{0}^{\infty} \mathfrak{F}_{1} (t) \exp\{ it \beta_{1} \} dt + e^{\dagger} \int_{0}^{\infty} \mathfrak{F}_{2} (t) \exp\{it\beta_{2}\} dt^{\dagger}\nonumber\\
		=& e F_{1}(\beta_{1}) + e^{\dagger} F_{2} (\beta_{2}).\label{p4A} 
	\end{align}
	
	Thus $F$ is holomorphic on $ \substack \prod_{\mathbb{BC}^{+}} ,$ as each $F_{i}$ is holomorphic in $ \substack \prod^{+}.$ Here each $ F_{i}$ is defined as
	$$ F_{i}(\beta_{i}) = \int_{0}^{\infty} \mathfrak{F}_{i}(t) \exp\{ it \beta_{1}\} .$$ For more details, see \cite{rudin1966}.\\
	Next, we show that the restrictions of these functions to the horizontal lines in $ \substack \prod_{\mathbb{BC}^{+}} $ is bounded in $ L_{k}^{2}\big( ( -\infty, \infty)_{\mathbb{D}}, dm \big).$ Let $ Z \in \substack \prod_{\mathbb{BC}^{+}}.$ Then $ Z = e \beta_{1} + e^{\dagger} \beta_{2} = x_{0} +ix_{1} + jx_{2} + kx_{3},$ and from equation (\ref{p4A}), we have
	\begin{equation}\label{C}
		F(Z) = e F_{1}(\beta_{1}) + e^{\dagger} F_{2}(\beta_{2}),
	\end{equation}
	where each $ F_{1} $ and $ F_{2} $ are of the form
	\begin{align}\label{D}
		F_{1}(\beta_{1}) =& \int_{0}^{\infty} \mathfrak{F}_{1} (t) \exp\{ -t(x_{1}-x_{2})\} \exp\{ it(x_{0}+x_{3})\} dt\\
		F_{2}(\beta_{2}) =& \int_{0}^{\infty} \mathfrak {F}_{2} (t) \exp\{ -t(x_{1}+x_{2})\} \exp\{ it(x_{0}-x_{3})\} dt.
	\end{align}
	Then $F_{1} $ and $F_{2}$ are the restrictions to the horizontal lines in $ \substack \prod^{+}$ and from \cite{rudin1966}, we see that these restrictions form a bounded set in $ L^{2}((-\infty,\infty),dm_{i})~;i=1,2.$ Hence from equation (\ref{C}), we see that the restrictions of $F$ to the horizontal lines in $\substack  \prod_{\mathbb{BC}^{+}} $ form a bounded set in $ L_{k}^{2}((-\infty,\infty)_{\mathbb{D}},dm).$ Thus, the following remark concludes that:

	\begin{remark}\label{p9}
		The restrictions $F_{1} $ and $F_{2}$ of $F$ to the horizontal lines also form a bounded set in $ L_{k}^{2}\big( ( -\infty, \infty)_{\mathbb{D}}, dm \big).$ For more details, we refer to  \cite{rudin1966}. 
	\end{remark}
	\section{Bicomplex Paley-Weiner Theorem}
	In this section, we generalize the Paley-Weiner theorem in a bicomplex setting. The basis of the Paley-Weiner theorem lies in the outstanding fact that the converse of the Remark \ref{p9} is also true.
	\begin{theorem}
		Let $ F : \substack \prod_{\mathbb{BC}^{+}} \longrightarrow \mathbb{BC} $ be a holomorphic function on $ \substack \prod_{\mathbb{BC}^{+}}$ and
		\begin{equation*}
			\sup_{\substack {\mathbb{D}\\ x_{1} > |x_{2} |}}  \frac{1}{2\pi} \int_{(-\infty,\infty)_{\mathbb{D}}} \| F(Z)\|_{k}^{2} d x_{0} = M < \infty .
		\end{equation*}
		Then there exists $ \mathfrak{F} \in L_{k}^{2} \big( ( -\infty, \infty)_{\mathbb{D}}, dm \big) $ such that
		$$ F(Z) = \int_{(0,\infty)_{\mathbb{D}}} \mathfrak{F}(t) \exp\{ itZ \}dt \odot dt^{\dagger}, $$
		where $ Z $ lies in $ \substack \prod_{\mathbb{BC}^{+}} $ with $ Z = e \beta_{1} + e^{\dagger} \beta_{2} = x_{0} + ix_{1}+jx_{2} + k x_{3}$ and
		$$ \int_{(0,\infty)_{\mathbb{D}}} \| F(t) \|_{k}^{2} dt = M $$ for some constant $M.$
	\end{theorem}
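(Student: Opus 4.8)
The plan is to reduce the bicomplex statement to the classical ($L^2$) Paley--Wiener theorem (see \cite[Theorem~19.2]{rudin1966}) applied separately to the two idempotent components of $F$. Write $Z = e\beta_1 + e^\dagger\beta_2$ with $\beta_1 = z_1 - iz_2$ and $\beta_2 = z_1 + iz_2$, and decompose $F(Z) = eF_1(\beta_1) + e^\dagger F_2(\beta_2)$ as in (\ref{C}). Since $F$ is holomorphic on $\prod_{\mathbb{BC}^+}$, each component $F_i$ is a genuine holomorphic function of the complex variable $\beta_i$ on the upper half plane $\prod^+$; this is the standard correspondence between bicomplex holomorphy and componentwise complex holomorphy under the idempotent splitting. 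Thus the problem becomes: produce $\mathfrak F_i \in L^2(0,\infty)$ with $F_i(\beta_i) = \int_0^\infty \mathfrak F_i(t)\, e^{it\beta_i}\,dt$, and then recombine.

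First I would recast the hypothesis as two scalar bounds. Recall from (\ref{B}) and (\ref{B1}) that $\operatorname{Im}\beta_1 = x_1 - x_2$ and $\operatorname{Im}\beta_2 = x_1 + x_2$, while $\operatorname{Re}\beta_1 = x_0 + x_3$ and $\operatorname{Re}\beta_2 = x_0 - x_3$. Using $\|F(Z)\|_k^2 = e\,|F_1(\beta_1)|^2 + e^\dagger|F_2(\beta_2)|^2$ and integrating in $x_0$ with $x_3$ held fixed (so that each real part sweeps out all of $\mathbb{R}$ after the substitution $s = x_0 \pm x_3$), one obtains
$$ \frac{1}{2\pi}\int_{(-\infty,\infty)_{\mathbb{D}}} \|F(Z)\|_k^2\, dx_0 = e\,\frac{1}{2\pi}\int_{-\infty}^{\infty} |F_1(s + i y_1)|^2\, ds + e^\dagger\,\frac{1}{2\pi}\int_{-\infty}^{\infty} |F_2(s + i y_2)|^2\, ds, $$
where $y_1 = x_1 - x_2$ and $y_2 = x_1 + x_2$. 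The change of variables $(x_1,x_2)\mapsto(y_1,y_2)$ maps the region $\{x_1 > |x_2|\}$ bijectively onto $(0,\infty)\times(0,\infty)$, and the two terms on the right depend on the independent parameters $y_1$ and $y_2$ separately. Hence the hyperbolic supremum splits, giving $M = eM_1 + e^\dagger M_2$ with $M_i = \sup_{y_i > 0} \tfrac{1}{2\pi}\int_{-\infty}^\infty |F_i(s+iy_i)|^2\,ds < \infty$.

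Each $F_i$ now satisfies exactly the hypotheses of the classical Paley--Wiener theorem, which furnishes $\mathfrak F_i \in L^2(0,\infty)$ with $F_i(\beta_i) = \int_0^\infty \mathfrak F_i(t)\, e^{it\beta_i}\,dt$ and $\int_0^\infty |\mathfrak F_i(t)|^2\,dt = M_i$. Setting $\mathfrak F(t) = e\,\mathfrak F_1(t) + e^\dagger \mathfrak F_2(t)$, extended by $0$ on $(-\infty,0)_{\mathbb{D}}$, yields by (\ref{B1*}) an element of $L_k^2\big((-\infty,\infty)_{\mathbb{D}},dm\big)$, and recombining the two representations via $\exp\{itZ\} = e\,e^{it\beta_1} + e^\dagger e^{it\beta_2}$ gives the asserted formula $F(Z) = \int_{(0,\infty)_{\mathbb{D}}} \mathfrak F(t)\exp\{itZ\}\,dt\odot dt^\dagger$. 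Finally $\int_{(0,\infty)_{\mathbb{D}}} \|\mathfrak F(t)\|_k^2\,dt = eM_1 + e^\dagger M_2 = M$, which is the claimed norm identity. I expect the only genuine obstacle to be the second step: one must justify carefully that the hyperbolic supremum over the single two-dimensional region $\{x_1 > |x_2|\}$ factors into a product of two independent one-dimensional suprema, which is precisely where the bijectivity of $(x_1,x_2)\mapsto(x_1-x_2,x_1+x_2)$ onto the first quadrant and the idempotent orthogonality $e\cdot e^\dagger = 0$ are used; everything else is a routine transcription of the scalar theorem through the idempotent decomposition.
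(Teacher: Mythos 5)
Your proof is correct, and it reaches the conclusion by a genuinely leaner route than the paper's. Both arguments rest on the same foundation --- the idempotent decomposition $F(Z)=eF_{1}(\beta_{1})+e^{\dagger}F_{2}(\beta_{2})$ and the classical Paley--Wiener theorem for the half-plane --- but they use that theorem differently. You treat it as a black box: verify its hypotheses for each component, import its full conclusion (the representation $F_{i}(\beta_{i})=\int_{0}^{\infty}\mathfrak{F}_{i}(t)e^{it\beta_{i}}\,dt$ together with the equality $\int_{0}^{\infty}|\mathfrak{F}_{i}(t)|^{2}\,dt=M_{i}$), and recombine through $e,e^{\dagger}$. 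The paper instead starts by \emph{supposing} the desired $\mathfrak{F}$ exists, invokes the scalar theorem componentwise only to arrive at a candidate formula for $\mathfrak{F}$ (equations (\ref{PW1})--(\ref{PW4})), and then re-derives well-definedness, the vanishing of $\mathfrak{F}$ on $(-\infty,0)_{\mathbb{D}}$, and the norm identity by transcribing Rudin's own contour argument into the bicomplex setting: rectangular paths $\curlywedge_{\alpha}$, Cauchy's theorem, the bicomplex Plancherel theorem, and the a.e.\ convergent subsequences of Theorem \ref{B2}. Your route buys economy, and it also repairs a point the paper glosses over: the paper never checks that $F_{1}$ and $F_{2}$ actually satisfy the scalar hypothesis $\sup_{y>0}\frac{1}{2\pi}\int_{-\infty}^{\infty}|F_{i}(s+iy)|^{2}\,ds<\infty$ before citing the Paley--Wiener theorem. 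Your change of variables $(x_{1},x_{2})\mapsto(y_{1},y_{2})=(x_{1}-x_{2},x_{1}+x_{2})$, mapping $\{x_{1}>|x_{2}|\}$ onto the open quadrant $(0,\infty)\times(0,\infty)$, supplies exactly this, and the surjectivity onto the \emph{full} quadrant is also what upgrades the hyperbolic supremum to the exact splitting $M=eM_{1}+e^{\dagger}M_{2}$ (in the partial order of $\mathbb{D}$, without independence of $y_{1}$ and $y_{2}$ one would only get an inequality), which is precisely what the final identity $\int_{(0,\infty)_{\mathbb{D}}}\|\mathfrak{F}(t)\|_{k}^{2}\,dt\odot dt^{\dagger}=M$ requires. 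What the paper's longer route would buy, in principle, is a self-contained bicomplex argument not resting on the splitting of the supremum; as written, however, it still depends on the componentwise reduction, so nothing is lost --- and some rigor is gained --- by your shortcut.
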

	\begin{proof}
		We begin the proof with the supposition that such a holomorphic $ L_{k}^{2} $ function exists, say $ \mathfrak{F} $
		and let $F$ be a bicomplex holomorphic function defined on the upper half plane $ \substack \prod_{\mathbb{BC}^{+}} .$ Then
		$$ F(Z) = e F_{1}(\beta_{1}) + e^{\dagger} F_{2}(\beta_{2}),$$
		where $F_{l}$ for $l=1,2$ is holomorphic on the complex upper half-plane $\substack  \prod^{+}.$
		Then, by the Paley-Weiner Theorem, for each $F_{1}$ and $ F_{2}  \in H(\substack \prod^{+}),$ there exist $ \mathfrak{F}_{1} $ and $ \mathfrak{F}_{2} $ in $ L^{2}(0,\infty) $ such that each $ F_{1}((x_{0} + x_{3} )+ i ( x_{1}- x_{2}) ) $ and $ F_{2}(( x_{0} - x_{3}) + i ( x_{1} + x_{2})) $ are the inverse fourier transform of  $ \mathfrak{F}_{1} \exp\{ - ( x_{1}-x_{2}) t \} $ and $ \mathfrak{F}_{2} \exp\{ -(x_{1}+x_{2} ) t \},$ respectively, that is,
		
		\begin{equation}\label{PW1}
			F_{1}(\beta_{1}) = \mathcal{F}^{-1} \big( \mathfrak{F_{1}}(t) \exp\{-(x_{1}-x_{2})\}\big)
		\end{equation}
		and
		\begin{equation}\label{PW2}
			F_{2}(\beta_{2}) = \mathcal{F}^{-1} \big ( \mathfrak{F_{2}} (t) \exp\{-(x_{1}+x_{2}\} \big).
		\end{equation}
		Then, by the inversion formula, we have
		\begin{equation}\label{PW3}
			\mathfrak{F_{1}}(t) = \mathcal{F}\{F_{1}(\beta_{1})\exp\{ - ( x_{1}-x_{2})t\}\}
		\end{equation}
		\begin{equation}\label{PW3A}
			\mathfrak{F}_{2}(t) = \mathcal{F}\{ F_{2}(\beta_{2}) \exp\{ - ( x_{1} + x_{2})t\}\}.
		\end{equation}
		Now, from equations (\ref{PW3}) and (\ref{PW3A}), we get
		\begin{align*}
			e \mathfrak{F}_{1} (t) + e^{\dagger} \mathfrak{F}_{2}(t) =& e \bigg( \frac{1}{2\pi} \int_{-\infty}^{\infty} F_{1}(\beta_{1}) \exp\{ -it(x_{0}+x_{3})- t x_{1} + t x_{2} \} dx_{0}\bigg) \\&+ e^{\dagger}\bigg( \frac{1}{2\pi} \int_{-\infty}^{\infty} F_{2}(\beta_{2}) \exp\{ -it(x_{0}-x_{3})- t x_{1} + t x_{2} \} dx_{0}^{\dagger}\bigg)\nonumber\\
			=& e \frac{1}{2\pi} \int_{-\infty}^{\infty} F_{1}(\beta_{1}) \exp\{ -it\beta_{1}\}d
			x_{0} + e^{\dagger} \frac{1}{2 \pi} \int_{-\infty}^{\infty} F_{2}(\beta_{2}) \exp\{ -it\beta_{2}\}dx_{0}^{\dagger} \nonumber\\
			=& \frac{1}{2\pi} \int_{(-\infty,\infty)_{\mathbb{D}}} F(Z) \exp\{ -itZ \} d Z\odot dZ^{\dagger}\nonumber\\
			=& \mathfrak{F}(Z).
		\end{align*}
		Thus, for a bicomplex holomorphic function in $\substack  \prod_{\mathbb{BC}^{+}},$ we assumed the existence of an $ L_{k}^{2} $ function $ \mathfrak{F} $ such that
		\begin{equation}\label{PW4}
			\mathfrak{F}(Z) = \frac{1}{2\pi} \int F(Z) \exp\{-itZ\} dZ \odot dZ^{\dagger}.
		\end{equation}
		The integral in (\ref{PW4}) is the result of choosing a horizontal line in $ \substack \prod_{\mathbb{BC}^{+}} ,$ as the equations (\ref{PW1}) and (\ref{PW2}) are representations along the horizontal lines in $ \substack \prod^{+} .$ Now, we need to show that $ \mathfrak{F} \in L_{k}^{2}\big(( 0,\infty)_{\mathbb{D}}\big)$ is uniquely defined. So, we use the Cauchy theorem here.\\
		For this, let $\curlywedge_{\alpha}$ be a rectangular path in $ \substack \prod_{\mathbb{BC}^{+}} .$ Then $ \curlywedge_{\alpha} $ being a closed path, can be written as
		\begin{equation}\label{PW5}
			\curlywedge_{\alpha} = e \curlywedge_{\alpha_{1}} + e^{\dagger} \curlywedge_{\alpha_{2}},
		\end{equation}
		where $ \curlywedge_{\alpha_{1}} $ and $ \curlywedge_{\alpha_{2}} $ are rectangular paths in $ e \substack \prod^{+} $ and $ e^{\dagger} \substack \prod^{+},$ respectively. Using the equation (\ref{PW5}), we can assume the vertices of $ \curlywedge_{\alpha_{1}}$ as $ e ( \pm \alpha + i ), e^{\dagger}( \pm \alpha + i ) $ and $ e ( \pm \alpha + i y ), e^{\dagger} ( \pm \alpha + i y ),$ let
		$$ I = \int_{\curlywedge_{\alpha}} F(Z) \exp\{-itZ\} dZ \odot dZ^{\dagger}. $$
		Then,
		\begin{equation}\label{PW6}
			I = e \int_{\curlywedge_{\alpha_{1}}} F_{1} \exp\{-it\beta_{1}\} d\beta_{1} + e^{\dagger} \int_{\curlywedge_{\alpha_{2}}} F_{2}(\beta_{2}) \exp\{-it\beta_{2}\} d\beta_{2},
		\end{equation}
		where $ F = e F_{1}  + e^{\dagger} F_{2} $ such that $ F_{1},F_{2} \in H( \substack\prod^{+})$ and $ Z = e \beta_{1} + e^{\dagger} \beta_{2} $ such that $( \beta_{1},\beta_{2}) \in \substack \prod^{+} \times \substack \prod^{+} .$
		So, by using Cauchy's theorem, we get
		\begin{equation}\label{PW7}
			I = 0.
		\end{equation}
		Using the equation (\ref{PW6}), we have $ I = e I_{1} + e^{\dagger} I_{2} .$ Solving $I_{1} $ for the straight lines $ e( \gamma+i )$ to $ e( \gamma + iy ),$ we get a sequence $ ( \alpha_{1,j})_{j=1}^{\infty} $ such that $ I_{1}(\alpha_{1,j}) \longrightarrow 0 $ and $ I_{1}(-\alpha_{1,j}) \longrightarrow 0 $ as $ j \rightarrow \infty $ in $ e \substack \prod^{+} .$\\
		Similarly, for $ I_{2},$ we find a sequence $ (\alpha_{2,j})_{j=1}^{\infty} $ such that $ I_{2}( \alpha_{2,j})  \longrightarrow 0 $ as $ j \rightarrow \infty $ and $ I_{2}(-\alpha_{2,j}) \longrightarrow 0$ as $ j \rightarrow \infty $ in $ e^{\dagger} \substack \prod^{+} .$ Thus there must be a sequence $ \{ \alpha_{k,j}\}_{j=1}^{\infty} $ such that $ \{\alpha_{k,j}\}_{j=1}^{\infty} = e \{ \alpha_{1,j}\} _{j=1}^{\infty} + e^{\dagger} \{ \alpha_{2,j}\}_{j=1}^{\infty} $ and
		\begin{equation}\label{PW8}
			I(\alpha_{k,j}) \longrightarrow 0~~ \textnormal{and}~~ I(-\alpha_{k,j})\longrightarrow 0.
		\end{equation}
		Proceeding further, define
		$$ G_{j}( x_1 , x_2 , t) = \frac{1}{2\pi} \int_{-\alpha_{k,j}}^{\alpha_{k,j}} F(z_{1} + j z_{2}) \exp\{-it(x_{0} + k x_{3}) \} dx_{0}.$$
		
		Then, by equations (\ref{PW7}) and (\ref{PW8}), we get
		\begin{equation}\label{PW9}
			\displaystyle \lim_{\substack {{j\rightarrow \infty}}} \{ \exp\{-ktx_{2} + tx_{1} \} G_{j} ( x_{1},x_{2},t) - \exp\{ -kt+t\} G_{j} ( 1,1,t) \} = 0.
		\end{equation}
		
		Now, let $\mathcal{F}_{\mathbb{BC}} $ be the bicomplex fourier transform and writing $ F_{x_{1},x_{2}}(x_{0},x_{3}) $ for $ F( x_{0} + ix_{1} + jx_{2} + k x_{3} ) .$ Then $ F_{x_{1},x_{2}} $ lies in $ L_{k}^{2}(-\infty, \infty)_{\mathbb{D}}.$\\
		By the bicomplex Plancheral theorem \cite{ghanmi2019}, we have
		$$ \displaystyle \lim_{\substack{{j\rightarrow \infty}}} \int_{(-\infty, \infty)_{\mathbb{D}}} \| \hat{F}_{x_{1},x_{2}} (t) - G_{j}(x_{1},x_{2},t) \|_{k}^{2} dt \odot dt^{\dagger} = 0 .$$
		Thus, by Theorem \ref{B2}, the sequence $\{G_{j}(x_{1},x_{2},t)\}$ has a pointwise convergent subsequence that converges to $\hat{F}_{x_{1},x_{2}} (t)$ for almost every t.
		Now, defining
		\begin{equation}\label{PW9a}
			\mathfrak{F}(t) = \exp\{ -kt+t\} \hat{F}_{1,1} (t).  
		\end{equation} 
		From equation (\ref{PW9}), we have
		\begin{equation}\label{PW10}
			\mathfrak{F}(t) = \exp\{ -ktx_{2} + t x_{1} \} \hat{F}_{x_{1},x_{2}}(t).
		\end{equation}
		Thus again, from the Plancheral Theorem for $ \mathbb{BC},$ we have for every $ x_{1},x_{2} \in ( 0, \infty)_{\mathbb{D}},$
		\begin{align}\label{PW11}
			\int_{(-\infty, \infty)_{\mathbb{D}}} \exp\{ -2(-ktx_{2} + tx_{1} )\} \| \mathfrak{F}(t)\|_{k}^{2} dt \odot dt^{\dagger} = & \int_{(-\infty, \infty)_{\mathbb{D}}} \| \hat{F}_{x_{1},x_{2}}(t)\|_{k}^{2} dt \odot dt^{\dagger}\nonumber\\
			=& \frac{1}{2\pi} \int_{(-\infty,\infty)_{\mathbb{D}}} \|F_{x_{1},x_{2}}(x_{0},x_{3})\|_{k}^{2} dx_{0} \nonumber\\
			\leqslant&
			M.
		\end{align}
		If we let $ x_{2}, x_{1}  \rightarrow \infty ,$ then equation (\ref{PW11}) shows that $ \mathfrak{F}(t) = 0 $ a.e in $ (-\infty, 0 ) _{\mathbb{D}},$ and if $ x_{2}, x_{1} \rightarrow 0,$ then
		\begin{equation}\label{PW12}
			\int_{(0,\infty)_{\mathbb{D}}} \| \mathfrak{F}(t)\|_{k}^{2}  dt \odot dt^{\dagger} \leqslant M.
		\end{equation}
		Thus
		\begin{equation}
			F_{x_{1},x_{2}}(x_{0},x_{3})= \int_{(-\infty,\infty)_{\mathbb{D}}} \hat{F}_{x_{1},x_{2}}(t)\exp\{ it(x_{0},x_{3})\} dt \odot dt^{\dagger}
		\end{equation}
		or
		\begin{align*}
			F(Z) =& \int_{(0,\infty)_{\mathbb{D}}} \mathfrak{F}(t) \exp \{ - ( -ktx_{2} + tx_{1} ) \} \exp\{ it(x_{0} +k x_{3} ) \} dt \odot dt^{\dagger}\nonumber\\
			=& \int_{(0,\infty)_{\mathbb{D}}} \mathfrak{F}(t) \exp\{ itZ\} dt \odot dt^{\dagger} ~~; ~Z \in \substack \prod_{\mathbb{BC}^{+}}.
		\end{align*}
		Keeping $ x_{2}, x_{1}$ fixed and again applying the bicomplex Plancheral Theorem, we obtain
		\begin{align*}
			\frac{1}{2\pi} \int_{(-\infty,\infty)_{\mathbb{D}}} \| F(z_{1} + jz_{2})\|_{k}^{2} dx_{0} =& \int_{(0,\infty)_{\mathbb{D}}} \| \mathfrak{F} \|_{k}^{2} \exp\{ -2(t(-kx_{2} + x_{1})) \} dt \odot dt^{\dagger}\nonumber\\
			\leqslant &  \int_{(0,\infty)_{\mathbb{D}}} \| \mathfrak{F}(t) \|_{k}^{2} dt \odot dt^{\dagger}.\nonumber
		\end{align*}
		Thus
		\begin{equation}\label{PW13}
			\sup_{\substack {{\mathbb{D}}\\ 0 < x_{1}, x_{2} < \infty }} \frac{1} { 2\pi} \int_{(-\infty,\infty)_{\mathbb{D}}} \|F(x_{0} + i x_{1} + j x_{2} + k x_{3} ) \|_{k}^{2} dx_{0} = M \leqslant \int_{(0, \infty)_{\mathbb{D}}} \| \mathfrak{F}(t)\|_{k}^{2} dt \odot dt^{\dagger}.
		\end{equation}
		
		Thus, from equations (\ref{PW12}) and (\ref{PW13}), we get
		$$ \int_{(0,\infty)_{\mathbb{D}}} \| \mathfrak{F} \|_{k}^{2} dt \odot dt^{\dagger} = M.$$
	\end{proof}

	Next, we discuss another class of all bicomplex $ F $ of the form
	\begin{equation}\label{PW14}
		F(Z) = \int_{(-A,A)_{\mathbb{D}}} \mathfrak{F}(t) \exp\{itZ\} dt \odot dt^{\dagger},
	\end{equation}
	where $ \mathfrak{F} \in L_{k}^{2}(-A,A)_{\mathbb{D}} $ and $ A $ are finite and positive. So,
	\begin{align}
		\| F(Z)\|_{k} \leqslant & \sqrt{2} \int_{(-A,A)_{\mathbb{D}}} \|\mathfrak{F}(t)\|_{k}\exp\{ - ( -ktx_{2} + tx_{1} ) \} dt \odot dt^{\dagger} \nonumber\\
		\leqslant & \sqrt{2} \exp\{ A \| ( tx_{1} - ktx_{2} ) \|_{k}\} \int_{(-A,A)_{\mathbb{D}}} \| \mathfrak{F}(t) \|_{k} dt \odot dt^{\dagger}. \label{PW15}
	\end{align}
	If $ C = \sqrt{2} \int_{(-A,A)_{\mathbb{D}}} \| \mathfrak{F}(t) \|_{k} dt ,$ then $ C < \infty $ and  (\ref{PW15}) becomes
	\begin{equation}\label{PW16}
		\| F(Z) \|_{k} \leqslant C \exp\{ A \| Z \|_{k} \}.
	\end{equation}
	We can also prove that $F$ being entire functions that satisfy (\ref{PW16}) are called bicomplex exponential types. The context of our next theorem is as: \\
	The type of functions in equation (\ref{PW14}) are exponential functions whose restrictions to the real and kth-axis lie in $ L_{k}^{2}.$ We prove that the converse is also true.

	\begin{theorem}
		Let $ F$ be a bicomplex function of exponential type and
		\begin{equation}\label{PW17}
			\int_{(-\infty,\infty)_{\mathbb{D}}} \| F ( x_{0} + k x_{3})\|_{k}^{2}dx_{0} \odot dx_{0}^{\dagger} ~<~ \infty.
		\end{equation}
		Then there exists $ \mathfrak{F} \in L_{k}^{2}(-A,A)_{\mathbb{D}} $ such that,
		\begin{equation}\label{PW18}
			F(Z) = \int_{(-A,A)_{\mathbb{D}}} \mathfrak{F}(t) \exp \{ itZ \} dt \odot dt^{\dagger}
		\end{equation}
		for all $ Z \in \mathbb{BC}.$
	\end{theorem}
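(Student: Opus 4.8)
The plan is to reduce the statement to the classical Paley--Weiner theorem for entire functions of exponential type (see \cite{rudin1966}) by passing to the idempotent representation. Writing $Z = e\beta_{1} + e^{\dagger}\beta_{2}$ and $F = eF_{1} + e^{\dagger}F_{2}$, the hypothesis that $F$ is entire on $\mathbb{BC}$ is equivalent to each component $F_{l}$ being an entire function on $\mathbb{C}(i)$, for $l=1,2$. The whole argument then consists of verifying that the two bicomplex hypotheses on $F$ split, componentwise, into exactly the two scalar hypotheses required by the classical theorem, applying that theorem to $F_{1}$ and $F_{2}$ separately, and reassembling the conclusion through the idempotent units.

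First I would translate the exponential-type hypothesis. Since $k = e - e^{\dagger}$ and $1 = e + e^{\dagger}$, the hyperbolic norm factors as $\|Z\|_{k} = e|\beta_{1}| + e^{\dagger}|\beta_{2}|$, so that $\exp\{A\|Z\|_{k}\} = e\exp\{A|\beta_{1}|\} + e^{\dagger}\exp\{A|\beta_{2}|\}$. Comparing idempotent components in the hyperbolic bound (\ref{PW16}), which holds componentwise for the partial order on $\mathbb{D}$, yields $|F_{l}(\beta_{l})| \leqslant C\exp\{A|\beta_{l}|\}$ for $l=1,2$; thus each $F_{l}$ is an entire function of exponential type $A$ in the ordinary sense.

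Next I would translate the integrability hypothesis (\ref{PW17}). On the real-hyperbolic plane one has $x_{0} + kx_{3} = e(x_{0}+x_{3}) + e^{\dagger}(x_{0}-x_{3})$, so the idempotent components of $F(x_{0}+kx_{3})$ are the restrictions of $F_{1}$ and $F_{2}$ to the real axis. Using the definition of the $\mathbb{D}$-integral together with $\|F\|_{k}^{2} = e|F_{1}|^{2} + e^{\dagger}|F_{2}|^{2}$, the finiteness of (\ref{PW17}) in $\mathbb{D}$ forces each scalar integral $\int_{-\infty}^{\infty}|F_{l}(s)|^{2}\,ds$ to be finite, i.e. the restriction of each $F_{l}$ to $\mathbb{R}$ lies in $L^{2}(\mathbb{R})$. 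With both scalar hypotheses in hand, the classical Paley--Weiner theorem applied to $F_{1}$ and to $F_{2}$ produces $\mathfrak{F}_{1},\mathfrak{F}_{2}\in L^{2}(-A,A)$ with $F_{l}(\beta_{l}) = \int_{-A}^{A}\mathfrak{F}_{l}(t)\exp\{it\beta_{l}\}\,dt$. Setting $\mathfrak{F} = e\mathfrak{F}_{1} + e^{\dagger}\mathfrak{F}_{2}$, which lies in $L_{k}^{2}(-A,A)_{\mathbb{D}}$ by (\ref{B1*}), and recombining via $\exp\{itZ\} = e\exp\{it\beta_{1}\} + e^{\dagger}\exp\{it\beta_{2}\}$ together with the definition of the $\mathbb{D}$-integral gives exactly (\ref{PW18}) for every $Z\in\mathbb{BC}$.

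The step I expect to be the main obstacle is the first translation: justifying rigorously that the single hyperbolic growth bound (\ref{PW16}) is genuinely equivalent to the two separate scalar exponential-type bounds. This hinges on the facts that the idempotent decomposition is order-preserving for the partial order on $\mathbb{D}$ and that both the hyperbolic norm and the hyperbolic exponential respect the $e$, $e^{\dagger}$ splitting; once this is made precise, the remaining steps are bookkeeping layered on top of the classical theorem. A secondary point to watch is that the classical Paley--Weiner theorem requires entireness on all of $\mathbb{C}(i)$ rather than mere holomorphy on a half-plane, so one must invoke the hypothesis that $F$ is of exponential type \emph{as an entire bicomplex function}, which guarantees that each $F_{l}$ is entire and not only holomorphic on $\prod^{+}$.
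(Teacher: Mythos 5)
Your proposal is correct, but it takes a genuinely different route from the paper. You split $F = eF_{1} + e^{\dagger}F_{2}$ at the outset, check that both hyperbolic hypotheses decompose into exactly the scalar hypotheses of the classical theorem --- the growth bound (\ref{PW16}) becomes $|F_{l}(\beta_{l})| \leqslant C_{l}\exp\{A|\beta_{l}|\}$ because $\|Z\|_{k} = e|\beta_{1}| + e^{\dagger}|\beta_{2}|$, the hyperbolic exponential respects the splitting, a hyperbolic inequality is a pair of real inequalities, and $(\beta_{1},\beta_{2})$ sweeps all of $\mathbb{C}\times\mathbb{C}$ as $Z$ sweeps $\mathbb{BC}$; while (\ref{PW17}) becomes $F_{l}|_{\mathbb{R}} \in L^{2}(\mathbb{R})$ --- and then you invoke the classical Paley--Weiner theorem as a black box on each component and reassemble $\mathfrak{F} = e\mathfrak{F}_{1} + e^{\dagger}\mathfrak{F}_{2}$. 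The paper instead transplants Rudin's proof of the classical theorem into bicomplex language: it regularizes $F_{\epsilon_{\mathbb{D}}}(x_{0}+kx_{3}) = F(x_{0}+kx_{3})\exp\{-\epsilon_{\mathbb{D}}\|x_{0}+kx_{3}\|_{k}\}$, reduces the theorem via the bicomplex Plancheral theorem and Corollary \ref{PW14a} to the vanishing statement (\ref{PW19}), and proves that statement using ray contours $\curlywedge_{\alpha}$ and the half-plane functions $\Omega_{\alpha}(W)$ of (\ref{PW22}), carrying out the idempotent decomposition inside each step of Rudin's argument while still citing the complex version for the component sub-steps. Your reduction buys brevity and a transparent logical structure: all analytic content is delegated to the classical theorem applied componentwise, and the only genuinely bicomplex input is the order-preserving idempotent calculus, which, as you rightly flag, is the one point that must be stated carefully (together with the standard fact that a bicomplex entire function has entire idempotent components). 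The paper's route, at the cost of the regularization and contour machinery, keeps the classical proof's intermediate objects --- the transform $\mathfrak{F}$ vanishing outside $[-A,A]_{\mathbb{D}}$, the a.e.\ representation on the hyperbolic axis followed by entireness to extend to all of $\mathbb{BC}$ --- explicit in bicomplex form, which is what connects this theorem to the Plancheral theorem and Corollary \ref{PW14a} used elsewhere in the paper.
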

	
	\begin{proof}
		Let $ \epsilon_{\mathbb{D}} $ be a number greater than 0, and let $ F_{\epsilon_{\mathbb{D}}}( x_{0}+kx_{3}) = F(x_{0}+kx_{3}) \exp\{ - \epsilon_{\mathbb{D}} \| x_{0} + k x_{3}\|_{k} \}.$ Then, we show that
		\begin{equation}\label{PW19}
			\displaystyle \lim_{\mathbb{D}} \\ \epsilon_{\mathbb{D}} \rightarrow 0 \int_{(-\infty,\infty)_{\mathbb{D}}} F_{\epsilon_{\mathbb{D}}} ( x_{0} + kx_{3}) \exp\{ -it(x_{0} + k x_{3} )\} dx_{0} \odot dx_{0}^{\dagger} = 0,
		\end{equation}
		where $ t \in ( -\infty,\infty)_{\mathbb{D}} $ and $ \|t\|_{k}>A.$ As we see that $ \| F_{\epsilon_{\mathbb{D}}} - F \|_{k,2} \rightarrow 0 $ as $ \epsilon_{\mathbb{D}} \rightarrow  0.$ The bicomplex Plancheral Theorem implies that $ \| \hat{F}_{\epsilon_{\mathbb{D}}} - \mathfrak{F} \|_{k,2} \rightarrow 0 $ as $ \epsilon_{\mathbb{D}} \rightarrow 0, $ where $ \mathfrak{F} $ is the bicomplex fourier transform of $ F. $ Thus, equation (\ref{PW19}) implies that $ \mathfrak{F}(t) = 0 $ outside $ [-A,A]_{\mathbb{D}} $ and hence  from Corllary \ref{PW14a} , we see that (\ref{PW18}) holds for almost every $ Z = x_{0} + k x_{3}. $ Also, the left and right-hand sides of the equation (\ref{PW18}) represent the entire bicomplex function. Thus, (\ref{PW18}) holds for every $ Z \in \mathbb{BC} .$ \\
		Thus, in order to prove the theorem, we shall show that (\ref{PW19}) holds. \\
		For this, let $\curlywedge_{\alpha}$ be a bicomplex path, defined as
		$$ \curlywedge_{\alpha}(u) = u ~\exp\{i\alpha\}, $$
		where $ u \in [ 0, \infty)_{\mathbb{D}}.$ Then,
		\begin{equation}\label{PW20}
			\curlywedge_{\alpha}(u) = e \curlywedge_{\alpha_{1}} + e^{\dagger} \curlywedge_{\alpha_{2}},
		\end{equation}
		where $ \curlywedge_{\alpha_1} $ and $ \curlywedge_{\alpha_2} $ are complex paths. Putting, the half-plane in $ \mathbb{BC} $ as
		$$ \substack \prod_{\mathbb{BC}(\alpha)} = \{ W : Re ( W \exp\{ i \alpha \} ) > A \}$$
		and again
		\begin{equation}\label{PW21}
			\substack \prod_{\mathbb{BC}(\alpha)} = e \substack \prod_{\alpha} +e^{\dagger} \substack \prod_{\alpha},
		\end{equation}
		where $ \substack \prod_{\alpha}$ are decomposition of $ \substack \prod_{\mathbb{BC}(\alpha)}$ in complex plane. Define,
		\begin{equation}\label{PW22}
			\Omega_{\alpha}(W) = \int_{\curlywedge_{\alpha}} F(Z) \exp\{ -WZ\} dZ \odot dZ^{\dagger}.
		\end{equation}
		Then $ \Omega_{\alpha}(W) = e \Omega_{\alpha}(W_{1}) + e^{\dagger} \Omega_{\alpha}(W_{2}),$ where 
		$$ \Omega_{\alpha}(W_{i}) = \int_{\curlywedge_{\alpha_{1}}}F_{i}(\beta_{i}) \exp \{ -W_{i}\beta_{i} \} d\beta_{i} \quad  \textnormal{for} \quad  i=1,2.$$
		Using the complex version of this theorem on  \cite[Page 375]{rudin1966}, we see that each $ \Omega_{\alpha}(W_{i}) $ is holomorphic in the half plane $\substack \prod_{\alpha},$ and so $ \Omega_{\alpha}(W) $ is holomorphic in $\substack  \prod_{\mathbb{BC}(\alpha)}.$ Also, if $ \alpha = 0,$ then
		$$ \Omega_{0}(W) = \int_{(0,\infty)_{\mathbb{D}}} F(x_{0}+kx_{3}) exp\{ -W ( x_{0}+kx_{3})\} dx_{0} \odot dx_{0}^{\dagger}~~~~~;\;\;\;ReW > 0 $$
		and if $ \alpha=\pi, $
		$$\Omega_{\pi}(W) = -\int_{(-\infty,0)_{\mathbb{D}}} F(x_{0}+kx_{3}) exp\{ -W ( x_{0}+kx_{3})\} dx_{0} \odot dx_{0}^{\dagger}~~~~~;\;\;\;ReW<0. $$
		Thus, $ \Omega_{0} $ and $ \Omega_{\pi} $ are holomorphic in the indicated half planes in (\ref{PW17}). \\
		Now, if we see that
		\begin{align*}
			\Omega_{0}(\epsilon_{\mathbb{D}} +&it ) - \Omega_{\pi}(-\epsilon_{\mathbb{D}}-it) \\
			& = \int_{(0,\infty)_{\mathbb{D}} } F(x_{0}+kx_{3}) \exp\{ - ( \epsilon_{\mathbb{D}} +it )(x_{0}+kx_{3})\} dx_{0} +\\& \int_{(-\infty,0)_{\mathbb{D}}}F(x_{0}+kx_{3}) \exp\{ - ( - \epsilon_{\mathbb{D}}+ it ) ( x_{0}+ k x_{3}) dx_{0}^{\dagger}\nonumber\\
			=& \int_{(-\infty,\infty)_{\mathbb{D}}} F(x_{0} + k x_{3}) \exp\{ -(\epsilon_{\mathbb{D}} + it ) ( x_{0} + k x_{3} ) - ( - \epsilon_{\mathbb{D}} + it ) ( x_{0}+ k x_{3}) \} dx_{0}^{\dagger} \nonumber\\
			=& \int_{(-\infty,\infty)_{\mathbb{D}}} F(x_{0} + k x_{3}) \exp\{ ( x_{0} + k x_{3} ) [ - \epsilon_{\mathbb{D}} -it + \epsilon_{\mathbb{D}} -it ] \} dx_{0} \odot dx_{0}^{\dagger}\nonumber\\
			=& \int_{(-\infty,\infty)_{\mathbb{D}}} F ( x_{0} + k x_{3} ) \exp \{ ( x_{0} + k x_{3} )(-it) \} dx_{0} \odot dx_{0}^{\dagger}, \nonumber
		\end{align*}
		then it is sufficient to show that $ \Omega_{0} ( \epsilon_{\mathbb{D}}) - \Omega_{\pi} ( - \epsilon_{\mathbb{D}} + it ) \rightarrow  0 $ as $ \epsilon_{\mathbb{D}} \rightarrow 0 $ if $ t > A $ and $ t < - A.$\\
		This can be shown by using the idempotent decomposition of $ \Omega_{0} $and $ \Omega_{\pi} $ with the help of idempotents $ e $ and $ e^{\dagger} $ and also using the fact that this theorem holds for its complex version.\\
		Therefore,
		$$\displaystyle \lim_{\mathbb{D}} \\ \epsilon_{\mathbb{D}} \rightarrow 0 \int_{(-\infty,\infty)_{\mathbb{D}}} F_{\epsilon_{\mathbb{D}}} ( x_{0} + k x_{3} ) \exp\{ -it(x_{0}+kx_{3})\} dx_{0} = 0. $$
	\end{proof}
	Now, we prove the bicomplex Cauchy integral formula for the upper half-plane. We start with the following statement:
	\begin{theorem}
		
		If $ F \in H^{p}(\substack \prod_{\mathbb{BC}^{+}})$ ; $ 1 \leqslant p < \infty,$ then
		$$F(Z) = \frac{1}{2\pi i} \int_{(-\infty,\infty)_{\mathbb{D}}} \frac{ F(W) }{ W - Z } dW \odot dW^{\dagger}~~; Z ~ \in~ \substack \prod_{\mathbb{BC}^{+}}$$
		and the integral vanishes for all $ Z \in \substack \prod_{\mathbb{BC^{-}}}.$ \\
		Conversely, if $ H \in L_{k}^{q}(dm) ~( 1 \leqslant p < \infty ) $ and
		$$ \frac{1}{2\pi i } \int_{(-\infty,\infty)_{\mathbb{D}}} \frac{H(W)}{W-Z} dW \odot dW^{\dagger} = 0 $$
		for all $ Z \in \substack \prod_{\mathbb{BC^{-}}}.$ Then for $ Z \in \substack \prod_{\mathbb{BC^{+}}},$ this integral represents a bicomplex function $ F \in H^{p}(\substack \prod_{\mathbb{BC^{+}}}), $ where the boundary function
		$$ F(x_{0}, x_{3}) = H(x_{0},x_{3})~~a.e.$$
	\end{theorem}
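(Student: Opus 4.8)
The plan is to reduce the bicomplex identity to two independent copies of the classical Cauchy integral formula for the Hardy space $H^{p}(\substack \prod^{+})$ of the complex upper half-plane, precisely the strategy used in the preceding theorems. First I would record the idempotent decomposition $F = e F_{1} + e^{\dagger} F_{2}$, where the product-type structure together with the definition of the hyperbolic norm forces $F \in H^{p}(\substack \prod_{\mathbb{BC}^{+}})$ to be equivalent to $F_{1}, F_{2} \in H^{p}(\substack \prod^{+})$. Writing $Z = e\beta_{1} + e^{\dagger}\beta_{2}$ and $W = e w_{1} + e^{\dagger} w_{2}$, the key algebraic observation is that the Cauchy kernel splits along the idempotents: since $e e^{\dagger} = 0$ and $e + e^{\dagger} = 1$, the bicomplex number $W - Z = e(w_{1} - \beta_{1}) + e^{\dagger}(w_{2} - \beta_{2})$ is invertible exactly when $w_{1} \neq \beta_{1}$ and $w_{2} \neq \beta_{2}$, in which case
$$\frac{1}{W - Z} = e\,\frac{1}{w_{1} - \beta_{1}} + e^{\dagger}\,\frac{1}{w_{2} - \beta_{2}}.$$
Because $i$ commutes with $e$ and $e^{\dagger}$, the scalar $\frac{1}{2\pi i}$ distributes unchanged into each component.

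Next I would insert this splitting into the $\mathbb{D}$-integral, which by its very definition distributes over the two idempotent components, turning the bicomplex contour integral into
$$\frac{1}{2\pi i}\int_{(-\infty,\infty)_{\mathbb{D}}}\frac{F(W)}{W-Z}\,dW\odot dW^{\dagger} = e\,\frac{1}{2\pi i}\int_{-\infty}^{\infty}\frac{F_{1}(w_{1})}{w_{1}-\beta_{1}}\,dw_{1} + e^{\dagger}\,\frac{1}{2\pi i}\int_{-\infty}^{\infty}\frac{F_{2}(w_{2})}{w_{2}-\beta_{2}}\,dw_{2}.$$
Now the scalar Cauchy integral formula for $H^{p}$ of the upper half-plane, as in \cite[Page 375]{rudin1966}, applies componentwise: for $\beta_{l} \in \substack \prod^{+}$ the $l$-th integral equals $F_{l}(\beta_{l})$, while it vanishes when $\beta_{l}$ lies in the lower half-plane. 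Since $Z \in \substack \prod_{\mathbb{BC}^{+}}$ is equivalent to $\beta_{1}, \beta_{2} \in \substack \prod^{+}$ by (\ref{B}) and (\ref{B1}), recombining the two components with $e$ and $e^{\dagger}$ reproduces $e F_{1}(\beta_{1}) + e^{\dagger} F_{2}(\beta_{2}) = F(Z)$; and for $Z \in \substack \prod_{\mathbb{BC^{-}}}$ both components vanish, giving the claimed vanishing.

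For the converse I would proceed in the same spirit. Decomposing $H = e H_{1} + e^{\dagger} H_{2}$ with $H_{1}, H_{2} \in L^{q}$, the hypothesis that the bicomplex Cauchy integral vanishes for every $Z \in \substack \prod_{\mathbb{BC^{-}}}$ splits, through the same kernel identity, into the vanishing of each scalar Cauchy integral for all $\beta_{l}$ in the lower half-plane. The converse half of the scalar theorem then furnishes functions $F_{l} \in H^{p}(\substack \prod^{+})$ represented on $\substack \prod^{+}$ by the corresponding integral and carrying boundary values $F_{l} = H_{l}$ almost everywhere; setting $F = e F_{1} + e^{\dagger} F_{2}$ yields $F \in H^{p}(\substack \prod_{\mathbb{BC}^{+}})$ whose boundary function satisfies $F(x_{0}, x_{3}) = H(x_{0}, x_{3})$ a.e.

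The step I expect to be the main obstacle is not the complex analysis, which is imported wholesale from the scalar theorem, but the careful justification of the reduction: establishing that the bicomplex Hardy class decomposes cleanly as $H^{p}(\substack \prod_{\mathbb{BC}^{+}}) = e\,H^{p}(\substack \prod^{+}) + e^{\dagger}\,H^{p}(\substack \prod^{+})$, so that membership and the requisite uniform means-bounds transfer to each $F_{l}$, and verifying the integrability along $(-\infty,\infty)_{\mathbb{D}}$ that legitimizes splitting the $\mathbb{D}$-integral of the Cauchy kernel. Once the $H^{p}$-decomposition and the kernel identity are secured, the remainder is bookkeeping with the idempotents $e$ and $e^{\dagger}$.
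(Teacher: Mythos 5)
Your proposal is correct and follows essentially the same route as the paper: idempotent decomposition of $F$, $H$, and the Cauchy kernel, componentwise application of the classical complex Cauchy integral formula for $H^{p}$ of the upper half-plane, and recombination via $e$ and $e^{\dagger}$ for both the direct statement and the converse. The only cosmetic difference is that the paper partially re-derives the scalar argument at the bicomplex level (via $C(F(Z)) - C(F(Z^{*}))$ and a Liouville-type decay argument), whereas you import the scalar theorem as a black box, which is equally legitimate.
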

	\begin{proof}
		
		Since $ H^{p}(\substack \prod_{\mathbb{BC^{+}}}) = e H^{P}(\substack \prod^{+})+e^{\dagger} H^{p}(\substack \prod^{+}) $ and $ F \in H^{p}(\substack \prod_{\mathbb{BC^{+}}}).$ Then the bicomplex Cauchy integral Formula, see \cite{reyes2019}, is given by
		$$ C(F(Z)) = e \frac{1}{2\pi i} \int_{-\infty}^{\infty} \frac{F_{1}(W_{1})}{W_{1}-\beta_{1}} dW_{1} + e^{\dagger} \frac{1}{ 2 \pi i} \int_{-\infty}^{\infty} \frac{F_{2}(W_{2})}{W_{2}-\beta_{2}} dW_{2}.$$
		That is, $$  C (F(Z)) = e I_{1} + e^{\dagger} I_{2},$$
		where $ I_{1} $ and $ I_{2}$ are the complex Cauchy integrals for $ F_{1},F_{2} \in H^{p}(\substack \prod^{+})$ which is analytic in both $ \substack \prod^{+}$ and $ \substack \prod^{-}.$ Then the bicomplex Cauchy integral is holomorphic in both $\substack  \prod_{\mathbb{BC^{+}}}$ and $\substack \prod_{\mathbb{BC^{-}}}.$ \\
		Now, using the idempotent decompositions, we have
		\begin{align*}
			C(F(Z)) - C(F(Z^{*})) =& \{ e \mathfrak{F}_{1}(\beta_{1}) + e^{\dagger} \mathfrak{F}_{2}(\beta_{2}) \} - \{ e \mathfrak{F}_{1}(\bar{\beta_{1}}) + e^{\dagger} \mathfrak{F}_{2}(\bar{\beta_{2}})\}\nonumber\\
			=&  e \{ \mathfrak{F}_{1}(\beta_{1}) - \mathfrak{F}_{1}(\bar{\beta_{1}})\} + e^{\dagger} \{\mathfrak{F}_{2} (\beta_{2}) - \mathfrak{F}_{2} (\bar{\beta_{2}})\}.\nonumber
		\end{align*}
		So, from the complex analogy of this theorem, we have,
		\begin{align*}
			C(F(Z)) - C(F(Z^{*})) =&  e \{ F_{1}(\beta_{1}) \} + e^{\dagger} \{ F_{2}(\beta_{2})\}~~\beta_{1},\beta_{2}\in \substack \prod^{+}\nonumber\\
			=& F(Z)~;~Z\in \substack \prod_{\mathbb{BC^{+}}}.\nonumber
		\end{align*}
		Thus, $ C(F(Z^{*}) $ is holomorphic for $ Z \in \substack  \prod_{\mathbb{BC^{+}}}.$ So, $ C(F(Z)) $ must be identically constant in $ \substack  \prod_{\mathbb{BC^{-}}}.$ Since $ C (F(Z)) \longrightarrow 0 $ as $ Z \longrightarrow \infty, $ we have
		$$ C(F(Z)) = 0. $$
		Thus,
		$$ C(F(Z^{*})) = F(Z) \in \substack \prod_{\mathbb{BC^{+}}}$$
		and $$ C(F(Z)) = 0  \in \substack\prod_{\mathbb{BC^{-}}}.$$
		Conversely, suppose $ H \in L_{k}^{q}(m).$ Then
		\begin{equation}\label{CT1}
			L_{k}^{q}(m) = e L^{q}(m_{1}) + e^{\dagger} L^{q}(m_{2})
		\end{equation}
		and
		$$ \frac{1}{2 \pi i } \int_{(-\infty, \infty)_{\mathbb{D}}} \frac{H(W)}{W-Z} dW \odot dW^{\dagger} = 0 ~~; Z \in \substack \prod_{\mathbb{BC^{+}}}.$$
		Since $ H \in L_{k}^{q}(m) $ and using the decomposition in equation  (\ref{CT1})  and using the fact that the result holds in each $ L^{q}(m_{i});i=1,2,$  the theorem follows.
		
	\end{proof}

\end{document}